\definecolor{labelkey}{rgb}{0,0.08,0.45}
\definecolor{refkey}{rgb}{0,0.6,0.0}
\definecolor{Brown}{rgb}{0.45,0.0,0.05}
\definecolor{lime}{rgb}{0.00,0.8,0.0}
\definecolor{lblue}{rgb}{0.5,0.5,0.99}
\definecolor{lblue}{rgb}{0.8,0.85,1.00}
\definecolor{anotherblue}{rgb}{.8, .8,1}
\definecolor{violet}{rgb}{0.9,0.6,0.9}
\definecolor{greenyellow}{rgb}{0.53,0.99,0.18}
\definecolor{Lyellow}{rgb}{0.87,0.87,0.87}
\definecolor{Lgray}{rgb}{0.92,0.92,0.92}
\definecolor{Mgray}{rgb}{0.5,0.5,0.5}
\definecolor{Gold}{rgb}{0.99,0.84,0.0}
\theoremstyle{theorem}
\newtheorem{theorem}{Theorem}
\newtheorem{lemma}[theorem]{Lemma}
\theoremstyle{definition}
\newtheorem*{definition}{Definition}
\newtheorem*{remark}{Remark}
\begin{document}
 
\title{On the Recurrence Formula for Fixed Points of the Josephus Function}
\markright{On the Recurrence Formula for Fixed Points}
\author{Yunier Bello-Cruz and Roy Quintero-Contreras}

\maketitle
\begin{abstract}
In this paper, we provide a comprehensive solution to the open problem regarding the existence of a recurrence formula for computing fixed points of the Josephus function precisely when the reduction constant is three. Incorporating this formula into recursive algorithms significantly improves addressing the Josephus problem, particularly for large inputs.
\end{abstract}

\section{Introduction} \label{sec:intro} 

The Josephus problem \cite{Jos,Knuth,Tai} presents a combinatorial challenge involving the arrangement of \(n\) participants in a circular formation, followed by a subsequent elimination process. Specifically: a starting point is chosen, a direction of rotation is determined, and participants are enumerated around the circle until \(k-1\) have been counted. Here, \(k\) serves as the reduction constant. The \(k^{th}\) participant is then eliminated and removed from the circle. This procedure continues until only one remains. The primary aim of this problem is to ascertain the initial position of this survivor, denoted \(J_{_k}(n)\). In this paper, we focus on the case where \(k=3\) and address an open question raised in \cite{Bel} regarding the existence of a recurrence formula for sequentially calculating fixed points of the Josephus function.

The Josephus problem is a classic example of a recurrence relation, which has been extensively studied in the literature. One of the earliest formal methodologies for solving the original problem, where $k=3$ and $n=41$, was presented by Bachet \cite{Bac}. Subsequently, Euler \cite{Eul} studied a recursive relation on $n$ for the general case, establishing a connection between the survivor's position $J_{_k}(n)$ and its preceding position $J_{_k}(n-1)$, marking a noteworthy advancement in the mathematical approach for understanding the problem. The Josephus problem has captivated mathematicians across diverse domains, triggering expansions in the examination of permutations and contemporary applications in computer algorithms, data structures, and image encryption; see, for instance, \cite{Cha, Nai, Yan, Wil, Hal, Her, Asv}. When the reduction constant is $2$, the problem has been entirely solved by Knuth \cite{Gra}, who deduced a closed-form expression for $J_{_2}(n)$. Additionally, Knuth formulated an efficient algorithm for evaluating the general Josephus function $J_{_k}$ and inspired a series of results that unleashed the recursive nature intrinsic to the problem; for specific examples, consult \cite{Odl, Rob, Tai, Dow, Woo, Cos, Uch, Chuang} and the references therein.

A recent study \cite{Bel} introduced a non-recursive strategy based on recurrence formulas between extremal points, drawing upon the discrete piecewise linear structure inherent to the Josephus function. This novel approach offered an efficient solution to the Josephus problem and led to a conjecture about the potential to enhance the proposed algorithm using only fixed points, which are special extremal points. This paper seeks to address this open question from \cite{Bel} by proposing an explicit recurrence formula for the fixed points of the classical Josephus function \(J_{_3}\). Possessing a recurrence formula for these fixed points is of significant importance as it paves the way for improving recursive algorithms that solve the Josephus problem.

This paper is organized as follows: The remainder of this section will provide the necessary background and notation. Section \ref{sec:fixedpoints} presents the main results of this paper, including the recurrence formula for the fixed points and a closed expression to evaluate the Josephus function $J_{_3}$. These formulas can be used to improve the extremal algorithm for solving the Josephus problem. 
Section \ref{sec:conclusion} includes concluding remarks of the paper where a brief discussion of the results and future research directions are presented.

\subsection{Notation and Definitions} \label{sec:preliminaries}
The mathematical formulation of the classical Josephus problem can be formulated as follows: Let $n$ people be arranged in a circle that closes up its ranks as individuals are picked out. Starting anywhere (person $1^{\rm st}$ spot), go sequentially around clockwise, picking out each third person (the \textit{reduction constant}) until one person is left (the \textit{survivor}). The position of the survivor is denoted by $J_{_3}(n)$, which belongs to the natural numbers $\mathbb{N}$. This procedure is called the \textit{elimination process}, and it naturally generates a discrete function $J_{_3}: \mathbb{N} \rightarrow \mathbb{N}$ that we will call the Josephus function. We say that the Josephus problem has been solved once we have determined the value of $J_{_3}$ at $n$.

We denote by $[[\ell, m]]$, the set $\{\ell,\ldots,m\}$ for any two integers $\ell$ and $m$ such that $\ell\le m$. Note further that $J_{_3}(n) \in [[1,n]]$ for every $n$. 

The discrete piecewise linear structure of the Josephus function naturally leads to the following definition of extremal points of $J_{_3}$.

\begin{definition}[Extremal and Fixed points]
A \textit{high extremal point} $n_{_e}$ is defined as a point that satisfies $J_{_3}(n_{_e}) \in \{n_{_e}-1,n_{_e}\}$. Especially, we say that a high extremal point $n_{_p}$  is a \textit{fixed point} of $J_{_3}$ if $J_{_3}(n_{_p})=n_{_p}$. On the other hand,  if  for $\check n_{_e}$ holds that $J_{_3}(\check n_{_e})\in \{1,2\}$, we refer to $\check n_{_e}$ as a \textit{low extremal point}. 
 \end{definition}

Note that a fixed point $n_{_p}$ is also a high extremal point. However, there are high extremal points that are not fixed points, which will be called \emph{pure} high extremal points. Moreover, for $n\geq 3$, a sequence of distinct high and low extremal points exists, and the Josephus function exhibits a piecewise linear structure between these extremal points; see Figure \ref{f1} for a better illustration of these features. 

\begin{figure}[h]
\centering
%\centering
\includegraphics[width=0.81\textwidth]{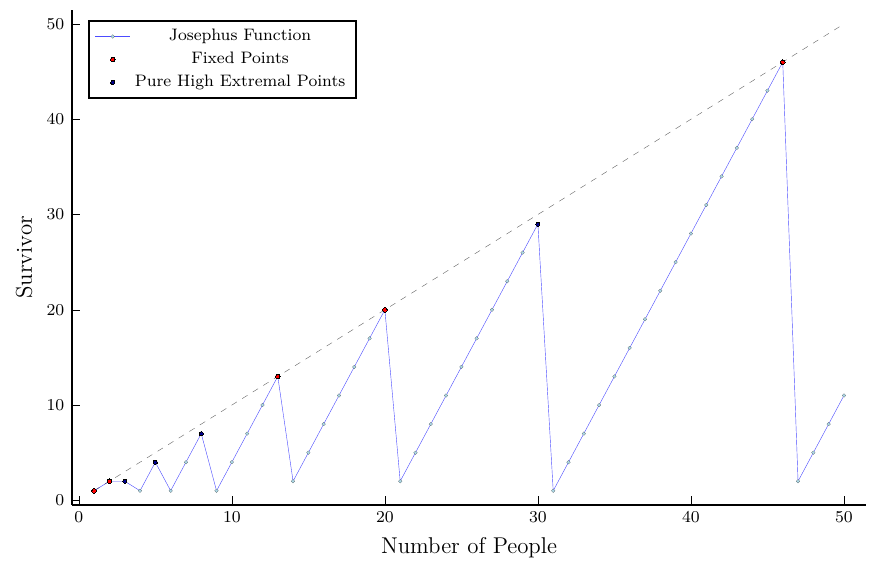}
\hfill
\caption{Graph of the Josephus function $J_{_3}$ for $n\le 50$.}
\label{f1}
\end{figure}

Denote $\{n_{_e}^{(i)}\}_{i\in \mathbb{N}}$ as the sequence of high extremal points for $J_{_3}$ increasingly distributed starting at $n_{_e}^{(1)}=1$. We also introduce the fixed point indicator, $f_i$, as the binary sequence defined by $f_i:=n_{_e}^{(i)}-J_{_3}(n_{_e}^{(i)})$ for all $i$. Note that $f_i$ describes whether the high extremal point $n_{_e}^{(i)}$ is a fixed point or not, \emph{i.e.}, $f_i=0$ if and only if $n_{_e}^{(i)}$ is a fixed point, or equivalent, $f_i=1$ if and only if $n_{_e}^{(i)}$ is a pure high extremal point.

\section{Properties of high extremal points and recurrence formulas} \label{sec:fixedpoints}

Next we recall several relevant properties of the Josephus function. We start by an important result involving high extremal points.

\begin{theorem}[Four Cases of High Extremal Points]\label{Thm-cases}
Let $\{n_{_e}^{(i)}\}_{i\in \mathbb{N}}$ be the sequence of high extremal points for $J_{_3}$ such that $n_{_e}^{(1)}=1$ and define $f_i:=n_{_e}^{(i)}-J_{_3}(n_{_e}^{(i)})$ and $r_i:={\rm mod}({n}_{_e}^{(i)},2)$ for all $i$.
    Then, the following statements hold:
%\begin{enumerate}

\item[ {\bf (i)}] If $f_{i}=1$ and $r_i=0$ then  ${n}_{_e}^{(i+1)}= \dfrac{3{n}_{_e}^{(i)}+2}{2}$ and $f_{i+1}=0$.

\item[ {\bf (ii)}] If $f_{i}=1$ and $r_i=1$ then  ${n}_{_e}^{(i+1)}= \dfrac{3{n}_{_e}^{(i)}+1}{2}$ and $f_{i+1}=1$.

\item[ {\bf (iii)}] If $f_{i}=0$ and $r_i=0$ then  ${n}_{_e}^{(i+1)}= \dfrac{3{n}_{_e}^{(i)}}{2} $ and $f_{i+1}=1$.

\item[ {\bf (iv)}] If $f_{i}=0$ and $r_i=1$ then  ${n}_{_e}^{(i+1)}= \dfrac{3{n}_{_e}^{(i)}+1}{2}  $ and $f_{i+1}=0$.
\end{theorem}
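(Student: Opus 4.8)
The plan is to work directly from the classical Euler recurrence for the Josephus function, which in the $1$-indexed convention used here reads
\[
J_{_3}(n) = \big((J_{_3}(n-1)+2)\bmod n\big)+1,\qquad J_{_3}(1)=1.
\]
The first step is to extract from this a trichotomy governing a single increment $n-1\mapsto n$. Introducing the \emph{gap} $g(n):=n-J_{_3}(n)$, one checks that: (A) if $g(n-1)\ge 2$ (equivalently $J_{_3}(n-1)\le n-3$) then $J_{_3}(n)=J_{_3}(n-1)+3$, so $g(n)=g(n-1)-2$; (B) if $g(n-1)=1$ (i.e.\ $J_{_3}(n-1)=n-2$) then $J_{_3}(n)=1$; and (C) if $g(n-1)=0$ (i.e.\ $J_{_3}(n-1)=n-1$) then $J_{_3}(n)=2$. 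In this language a high extremal point is exactly an index $n$ with $g(n)\in\{0,1\}$, and the indicator satisfies $f_i=g(n_{_e}^{(i)})$.

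Next I would analyze the step immediately after a high extremal point $m:=n_{_e}^{(i)}$. Since $g(m)=f_i\in\{0,1\}$, case (B) or (C) applies and the function drops to $J_{_3}(m+1)=2-f_i=:v_0$. From there, as long as the gap remains at least $2$, case (A) forces the linear climb $J_{_3}(m+1+t)=v_0+3t$ with $g(m+1+t)=(m+1-v_0)-2t$. The decisive structural fact is that the gap decreases by exactly $2$ per climbing step and hence retains its parity; consequently it lands on exactly one of the values in $\{0,1\}$---on $0$ when $m+1-v_0$ is even and on $1$ when it is odd---without ever overshooting into a wraparound. The first index $t^\ast$ at which $g$ reaches $\{0,1\}$ therefore gives the next high extremal point $n_{_e}^{(i+1)}=m+1+t^\ast$, with $f_{i+1}$ equal to that landing value.

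Finally I would read off (i)--(iv) by substituting $v_0=2-f_i$ and splitting on the parity $r_i$ of $m$. Since $m+1-v_0$ equals $m$ when $f_i=1$ and $m-1$ when $f_i=0$, its parity---and hence $f_{i+1}$---is pinned down by $r_i$ exactly as the four cases assert; solving $g(m+1+t^\ast)\in\{0,1\}$ for $t^\ast$ and simplifying $m+1+t^\ast$ then produces the claimed closed forms $\tfrac{3m+2}{2}$, $\tfrac{3m+1}{2}$, $\tfrac{3m}{2}$ and $\tfrac{3m+1}{2}$.

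The main obstacle is not the arithmetic but rigorously justifying that regime (A) persists throughout the entire climb, so that $t^\ast$ is the \emph{first} hit and $n_{_e}^{(i+1)}$ is genuinely the immediate successor in the increasing enumeration of high extremal points. Concretely, I must verify that $g(m+s)\ge 2$ for every $s=1,\dots,t^\ast$, ruling out a premature drop (case (B)/(C)) or an earlier index with $g\in\{0,1\}$; this follows because the landing value forces $g(m+t^\ast)\in\{2,3\}$ while the gap is strictly decreasing by $2$, but it is precisely the point where the enumeration claim must be argued with care. The handful of small indices where the climb is trivial (for instance $m\le 2$, where $t^\ast=0$) should be dispatched separately as base cases.
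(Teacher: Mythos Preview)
Your proof is correct and follows a genuinely different route from the paper. The paper establishes case (i) by quoting equations (11)--(14) of Corollary~7 from the companion preprint \cite{Bel}: those formulas give a closed expression for $n_{_e}^{(i+1)}$ and $J_{_3}(n_{_e}^{(i+1)})$ in terms of $n_{_e}^{(i)}$, $f_i$, and an auxiliary quantity $c_i$, and the authors simply evaluate them under the hypotheses $f_i=1$, $r_i=0$; the remaining three cases are then declared analogous. Your argument, by contrast, is entirely self-contained: you start from the elementary Euler recurrence $J_{_3}(n)=((J_{_3}(n-1)+2)\bmod n)+1$, translate it into the gap trichotomy (A)/(B)/(C), and then track the gap through the linear climb after a drop, using the parity invariant to identify where it first re-enters $\{0,1\}$.

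What each buys: the paper's approach is terse but opaque unless one has \cite{Bel} in hand, since all the content is hidden in the cited corollary. Your approach exposes the mechanism---the gap decreases by exactly $2$ per step, so its parity at the drop pins down both $f_{i+1}$ and the location of the next extremal point---and in particular makes it transparent \emph{why} the next high extremal point is genuinely the immediate successor (no intermediate index has $g\in\{0,1\}$, since the gap is at least $2$ throughout the climb). Your identification of the only delicate point, namely that regime (A) persists for $s=1,\dots,t^\ast$, and your handling of the degenerate cases $m\in\{1,2\}$ with $t^\ast=0$, are both on target. The argument would carry over essentially unchanged to other reduction constants $k$, with the gap decreasing by $k-1$ per step.
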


\begin{proof}
    
To prove (i), note that if $f_i=1$ and $r_i:={\rm mod}({n}_{_e}^{(i)},2)=0$ (or ${n}_{_e}^{(i)}$ even), Equations (11), (12) and (13) in Corollary $7$ of \cite{Bel} can be used to show that $c_i=0$ and hence,
\begin{align*}
{n}_{_e}^{(i+1)}&=\dfrac{3({n}_{_e}^{(i)}+1)-(0+1)}{2}-0 =\dfrac{3{n}_{_e}^{(i)}+2}{2}.
\end{align*} Note further that Equation (14) in Corollary $7$ of \cite{Bel} gives us
$$
J_{_3}(n_{_e}^{(i+1)})=2-f_i+3\left\lfloor\dfrac{{n}_{_e}^{(i)}+f_i-1}{2}\right\rfloor=1+3\left\lfloor\dfrac{{n}_{_e}^{(i)}}{2}\right\rfloor=\dfrac{3{n}_{_e}^{(i)}+2}{2},
$$ 
using the fact that ${n}_{_e}^{(i)}$ is even in the last equality. So, $f_{i+1}=n_{_e}^{(i+1)}-J_{_3}(n_{_e}^{(i+1)})=0$, which completes the proof of item (i).

The proofs for remainder cases {\bf (ii)}, {\bf (iii)}, and {\bf (iv)} follow a similar method, using Corollary $7$ from \cite{Bel}.
\end{proof}

Note that the above result gives us a clear picture of the behavior of the sequence of high extremal points for $J_{_3}$, $\{n_{_e}^{(i)}\}_{i\in \mathbb{N}}$. In particular, if $n_{_e}^{(i)}$ is a fixed point, $n_{_e}^{(i+1)}$ is a fixed point if and only if $n_{_e}^{(i)}$ is odd. On the other hand, if $n_{_e}^{(i)}$ is not a fixed point, $n_{_e}^{(i+1)}$ is a fixed point if and only if $n_{_e}^{(i)}$ is even. This important observation will be used below to understand how many pure high extremal points are between two consecutive fixed points, which is crucial to derive a recurrence formula for computing consecutive fixed points.  

Next, we will unify the four cases of the above theorem, which allows us to establish simplified recurrence formulas between consecutive high extremal points, $n_{_e}^{(i)}$, their fixed point indicators, $f_i$, their parities, $r_i$, and the expression for their functional values, $J_{_3}({n}_{_e}^{(i)})$.

\begin{lemma}[Recurrence Formulas for High Extremal Points] Let $\{n_{_e}^{(i)}\}_{i\in \mathbb{N}}$ be the sequence of high extremal points for $J_{_3}$ starting at $n_{_e}^{(1)}=1$ and define $r_i:={\rm mod} (n_{_e}^{(i)}, 2)$, $f_i:=n_{_e}^{(i)}-J_{_3}(n_{_e}^{(i)})$, and $s_i:={\rm mod} \left ((3n_{_e}^{(i)}+2-r_{_i})/2, 2\right )$ for all $i$. Then, 
\begin{align}\label{heprecurrence}
{n}_{_e}^{(i+1)}&=\dfrac{3{n}_{_e}^{(i)}+1+(1-r_{_i})(2f_{i}-1)}{2}, \\\label{heprecurrenceJ}
J_{_3}({n}_{_e}^{(i+1)})&=\dfrac{3{n}_{_e}^{(i)}+(2-3r_{_i})(2f_{i}-1)}{2},\\\label{heprecurrencef}
f_{i+1}&=f_i-(1-r_i)(2f_i-1), \\\label{heprecurrencer}
r_{i+1}&=s_{_i}-(1-r_{_i})(1-f_i)(2s_{_i}-1).
\end{align}
\end{lemma}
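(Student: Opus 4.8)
The plan is to derive all four identities directly from the four-case Theorem~\ref{Thm-cases}, exploiting the fact that each right-hand side is a multilinear expression (affine in each variable separately) in the binary indicators $f_i,r_i$. Since $f_i,r_i\in\{0,1\}$, such an expression is completely determined by its values at the four corners $(f_i,r_i)\in\{0,1\}^2$, so it suffices to check that every formula reproduces the corresponding conclusion of Theorem~\ref{Thm-cases} in each of its four cases. This converts the lemma into a finite verification and simultaneously explains why these particular coefficients appear in \eqref{heprecurrence}--\eqref{heprecurrencer}.

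First I would dispatch \eqref{heprecurrence} and \eqref{heprecurrencef}. Substituting $(f_i,r_i)=(1,0),(1,1),(0,0),(0,1)$ into the factor $(1-r_i)(2f_i-1)$ yields $+1,\,0,\,-1,\,0$, respectively, which turns \eqref{heprecurrence} into exactly the four values $\tfrac{3n_{_e}^{(i)}+2}{2}$, $\tfrac{3n_{_e}^{(i)}+1}{2}$, $\tfrac{3n_{_e}^{(i)}}{2}$, $\tfrac{3n_{_e}^{(i)}+1}{2}$ of cases (i)--(iv); the same substitutions make \eqref{heprecurrencef} return $0,1,1,0$, matching the stated values of $f_{i+1}$. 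For \eqref{heprecurrenceJ} I would avoid a separate case check: using the defining identity $J_{_3}(n_{_e}^{(i+1)})=n_{_e}^{(i+1)}-f_{i+1}$ and subtracting \eqref{heprecurrencef} from \eqref{heprecurrence}, the numerator becomes $3n_{_e}^{(i)}+1-2f_i+3(1-r_i)(2f_i-1)$, which simplifies to $3n_{_e}^{(i)}+(2-3r_i)(2f_i-1)$ after expanding, recovering \eqref{heprecurrenceJ} in one algebraic step.

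The main obstacle is \eqref{heprecurrencer}, since $r_{i+1}=\mathrm{mod}(n_{_e}^{(i+1)},2)$ is a parity rather than a quantity read off directly from Theorem~\ref{Thm-cases}, and it must be tied to the auxiliary $s_i=\mathrm{mod}\big((3n_{_e}^{(i)}+2-r_i)/2,\,2\big)$. The key observation I would isolate is that $(3n_{_e}^{(i)}+2-r_i)/2$ is always an integer and equals $n_{_e}^{(i+1)}$ precisely in cases (i), (ii), (iv), whereas in case (iii) it equals $n_{_e}^{(i+1)}+1$. Consequently $s_i=r_{i+1}$ in the first three cases and $s_i=1-r_{i+1}$ in case (iii). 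Recognizing $(1-r_i)(1-f_i)$ as the indicator that is $1$ exactly in case (iii) and $0$ otherwise, the expression $s_i-(1-r_i)(1-f_i)(2s_i-1)$ collapses to $s_i$ off case (iii) and to $s_i-(2s_i-1)=1-s_i$ on case (iii), which equals $r_{i+1}$ in every case. I expect the careful bookkeeping of this off-by-one shift in case (iii)---and verifying that the shifted argument flips parity---to be the only genuinely delicate point, the remaining computations being routine substitutions.
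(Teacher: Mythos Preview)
Your proposal is correct and follows essentially the same approach as the paper: case-checking \eqref{heprecurrence} and \eqref{heprecurrencef} against Theorem~\ref{Thm-cases}, deriving \eqref{heprecurrenceJ} algebraically from $J_{_3}(n_{_e}^{(i+1)})=n_{_e}^{(i+1)}-f_{i+1}$, and handling \eqref{heprecurrencer} via the identity $n_{_e}^{(i+1)}=(3n_{_e}^{(i)}+2-r_i)/2-(1-r_i)(1-f_i)$. Your treatment of \eqref{heprecurrencer} is in fact a bit more streamlined than the paper's, which appends a redundant eight-case verification after already establishing the formula.
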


\begin{proof} The formulas for ${n}_{_e}^{(i+1)}$ given in the items (i) to (iv) of Theorem \ref{Thm-cases} coincide with \eqref{heprecurrence} after noting that
    $$\left. \begin{cases}
     f_{i}=1, r_{_i}=0 \\
    f_{i}=1, r_{_i}=1 \\
    f_{i}=0, r_{_i}=0 \\
    f_{i}=0, r_{_i}=1        
    \end{cases}\hspace{-.45cm} \right . \quad {\rm imply} \quad 1+(1-r_{_i})(2f_{i}-1)= \left. \begin{cases}
    2 \\
    1 \\
    0 \\
    1 ,
    \end{cases} \right. \; {\rm and}\; {\rm then} \quad \left. \begin{cases}
      {n}_{_e}^{(i+1)}= \dfrac{3{n}_{_e}^{(i)}+2}{2}  \\
      {n}_{_e}^{(i+1)}= \dfrac{3{n}_{_e}^{(i)}+1}{2} \\
      {n}_{_e}^{(i+1)}= \dfrac{3{n}_{_e}^{(i)}}{2} \\
      {n}_{_e}^{(i+1)}= \dfrac{3{n}_{_e}^{(i)}+1}{2}.
      \end{cases}\hspace{-.45cm} \right.$$
In similar way, note that if
$$\left. \begin{cases}
f_{i}=1, r_{_i}=0 \\
    f_{i}=1, r_{_i}=1 \\
    f_{i}=0, r_{_i}=0 \\
    f_{i}=0, r_{_i}=1        
    \end{cases}\hspace{-.45cm} \right . \quad {\rm then} \quad f_i-(1-r_i)(2f_i-1)= f_{i+1}=\left.\begin{cases}
        0  \\
         1 \\
         1 \\
          0 ,  
         \end{cases}\hspace{-.45cm} \right.$$
which proves \eqref{heprecurrencef}. On the other hand, the functional value expression of ${n}_{_e}^{(i+1)}$, $J_{_3}({n}_{_e}^{(i+1)})$, in \eqref{heprecurrenceJ} follows directly from the fact that $J_{_3}(n_{_e}^{(i+1)})={n}_{_e}^{(i+1)}-f_{i+1}$, which can be rewritten by using \eqref{heprecurrence} and \eqref{heprecurrencef} as follows:
\begin{align*}
    J_{_3}(n_{_e}^{(i+1)})=&\dfrac{3{n}_{_e}^{(i)}+1-2f_{i+1}+(1-r_{_i})(2f_{i}-1)}{2}\\=&\dfrac{3{n}_{_e}^{(i)}+1-2f_i+3(1-r_{_i})(2f_{i}-1)}{2}\\=&\dfrac{3{n}_{_e}^{(i)}+(2-3r_{_i})(2f_{i}-1)}{2}.
\end{align*}

To prove \eqref{heprecurrencer}, we recursively generate the sequence $\{r_{_i}\}_{i\in \mathbb{N}}$, starting at $r_{_1}={\rm mod}(n_{_e}^{(1)})={\rm mod}(1)=1$ and defining $s_i:={\rm mod} \left (\dfrac{3n_{_e}^{(i)}+2-r_{_i}}{2}, 2\right )$ for all $i$, which is well-defined because $\dfrac{3n_{_e}^{(i)}+2-r_{_i}}{2}$ is always an integer number. Note that \eqref{heprecurrence} implies that 
$$
{n}_{_e}^{(i+1)}=\dfrac{3n_{_e}^{(i)}+2-r_i+(1-r_i)(2f_i-2)}{2}= \dfrac{3n_{_e}^{(i)}+2-r_i}{2}-(1-r_i)(1-f_i).$$
Hence, $r_{i+1}=s_i-(1-r_i)(1-f_i)(2s_i-1)$. One may verify that the sequence $\{r_{_i}\}_{i\in \mathbb{N}}$ is generated by the following rule:
\begin{align*}
  \text{ if }  \hspace{.5cm} \begin{dcases}
    f_{_i}=1, r_{_i}=1, s_{_i}=0 \\
    f_{_i}=0, r_{_i}=1, s_{_i}=0\\
    f_{_i}=1, r_{_i}=0, s_{_i}=0 \\
    f_{_i}=0, r_{_i}=0, s_{_i}=1 ,   
\end{dcases} \;\text{ then }\;  \hspace{.5cm} s_{_i}-(1-r_{_i})(1-f_{_i})(2s_{_i}-1)=r_{i+1}=0, 
\end{align*}and 
\begin{align*} 
 \text{ if } \hspace{.5cm} \begin{dcases}
  f_{_i}=1, r_{_i}=0, s_{_i}=1 \\
  f_{_i}=0, r_{_i}=0, s_{_i}=0 \\
  f_{_i}=1, r_{_i}=1, s_{_i}=1 \\
   f_{_i}=0, r_{_i}=1, s_{_i}=1,   
\end{dcases} \;\text{ then }\;  \hspace{.5cm} s_{_i}-(1-r_{_i})(1-f_{_i})(2s_{_i}-1)=r_{i+1}=1.
\end{align*}
Hence, the recursive formula \eqref{heprecurrencer} holds.

\end{proof}
We denote the high extremal point $n_{_e}$ as $n_{_p}$ whenever $n_{_e}$ is a fixed point. Let us now focus on investigating the sequence of fixed points $\{n_{_p}^{(\ell)}\}_{\ell\in\mathbb{N}}$ of the Josephus function $J_{_3}$. It is crucial to remember that while every fixed point is automatically a high extremal point, the converse is not always true. Additionally, we should keep in mind that there is an infinite number of fixed points for $J_{_3}$, as stated in Theorem $9$ of \cite{Bel}. These fixed points form a sequence that starts in $1$ and always increases. The behavior of the sequence of fixed points is intriguing and complex as well as the number of pure high extremal points between consecutive fixed points $n_{_p}^{(\ell)}$ and $n_{_p}^{(\ell+1)}$, which will be denoted by $\overline{m}_{\ell}$; see the following table.

\begin{table}[h]
    \centering
    \begin{tabular}{|c|r|c||c|r|c||c|r|c|}
    \hline
    \multicolumn{1}{|c|}{\cellcolor[gray]{0.89} $\ell$} & \multicolumn{1}{|c|}{\cellcolor[gray]{0.89} $n_{_p}^{(\ell)}$} & \multicolumn{1}{|c||}{\cellcolor[gray]{0.89} $\overline{m}_{\ell}$} & \multicolumn{1}{|c|}{\cellcolor[gray]{0.89}$\ell$} & \multicolumn{1}{|c|}{\cellcolor[gray]{0.89}$n_{_p}^{(\ell)}$} & \multicolumn{1}{|c|}{\cellcolor[gray]{0.89}$\overline{m}_{\ell}$} & \multicolumn{1}{|c|}{\cellcolor[gray]{0.89}$\ell$} & \multicolumn{1}{|c|}{\cellcolor[gray]{0.89}$n_{_p}^{(\ell)}$} & \multicolumn{1}{|c|}{\cellcolor[gray]{0.89}$\overline{m}_{\ell}$} \\ 
    \hline \hline
    \cellcolor[gray]{0.89}1 & \num{1} & 0 & \cellcolor[gray]{0.89}14 & \num{103690} & 5 & \cellcolor[gray]{0.89}27 & \num{29824201117} & 0 \\ \hline
    \cellcolor[gray]{0.89}2 & \num{2} & 3 & \cellcolor[gray]{0.89}15 & \num{1181101} & 0 & \cellcolor[gray]{0.89}28 & \num{44736301676} & 1 \\ \hline
    \cellcolor[gray]{0.89}3 & \num{13} & 0 & \cellcolor[gray]{0.89}16 & \num{1771652} & 1 & \cellcolor[gray]{0.89}29 & \num{100656678772} & 1 \\ \hline
    \cellcolor[gray]{0.89}4 & \num{20} & 1 & \cellcolor[gray]{0.89}17 & \num{3986218} & 7 & \cellcolor[gray]{0.89}30 & \num{226477527238} & 2 \\ \hline
    \cellcolor[gray]{0.89}5 & \num{46} & 2 & \cellcolor[gray]{0.89}18 & \num{102162424} & 1 & \cellcolor[gray]{0.89}31 & \num{764361654430} & 2 \\ \hline
    \cellcolor[gray]{0.89}6 & \num{157} & 0 &\cellcolor[gray]{0.89} 19 & \num{229865455} & 0 & \cellcolor[gray]{0.89}32 & \num{2579720583703} & 0 \\ \hline
    \cellcolor[gray]{0.89}7 & \num{236} & 1 & \cellcolor[gray]{0.89}20 & \num{344798183} & 0 & \cellcolor[gray]{0.89}33 & \num{3869580875555} & 0 \\ \hline
    \cellcolor[gray]{0.89}8 & \num{532} & 1 & \cellcolor[gray]{0.89}21 & \num{517197275} & 0 & \cellcolor[gray]{0.89}34 & \num{5804371313333} & 0 \\ \hline
    \cellcolor[gray]{0.89}9 & \num{1198} & 2 & \cellcolor[gray]{0.89}22 & \num{775795913} & 0 & \cellcolor[gray]{0.89}35 & \num{8706556970000} & 1 \\ \hline
    \cellcolor[gray]{0.89}10 & \num{4045} & 0 & \cellcolor[gray]{0.89}23 & \num{1163693870} & 2 & \cellcolor[gray]{0.89}36 & \num{19589753182501} & 0 \\ \hline
    \cellcolor[gray]{0.89}11 & \num{6068} & 1 & \cellcolor[gray]{0.89}24 & \num{3927466813} & 0 & \cellcolor[gray]{0.89}37 & \num{29384629773752} & 1 \\ \hline
    \cellcolor[gray]{0.89}12 & \num{13654} & 2 & \cellcolor[gray]{0.89}25 & \num{5891200220} & 1 & \cellcolor[gray]{0.89}38 & \num{66115416990943} & 0 \\ \hline
    \cellcolor[gray]{0.89}13 & \num{46084} & 1 & \cellcolor[gray]{0.89}26 & \num{13255200496}  & 1 & \cellcolor[gray]{0.89}39 & \num{99173125486415} & 0 \\ \hline 
    \end{tabular}
    \caption{Values of $n_{_p}^{(\ell)}$ and $\overline{m}_{\ell}$ for $\ell\in\{1,2,\ldots,39\}$.}
    \label{fixed-points-values}
\end{table}

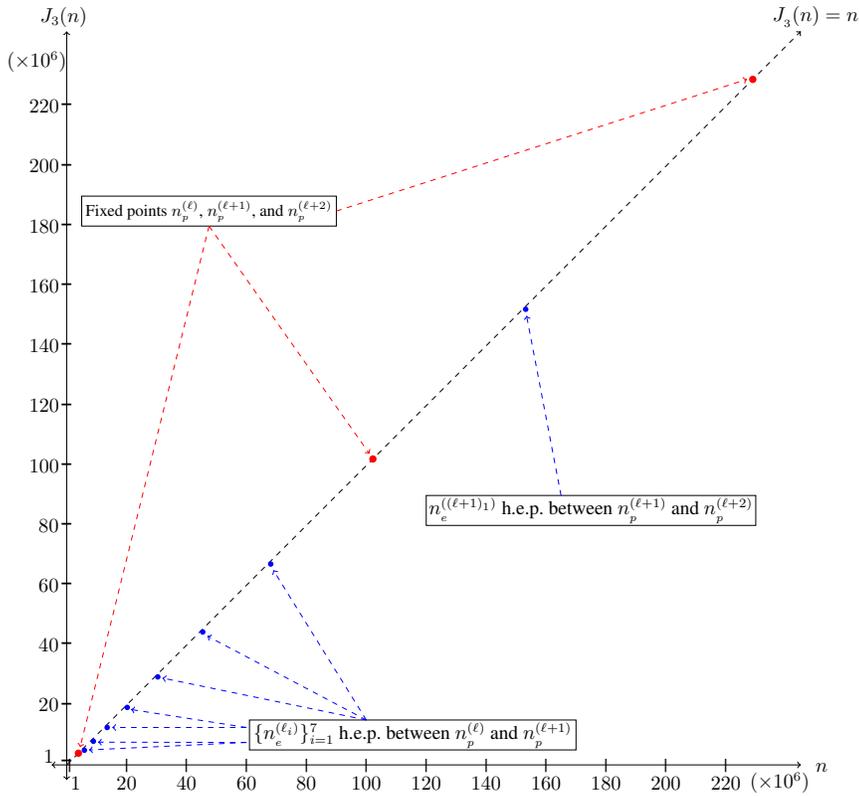
\begin{figure}
    \centering
    \resizebox{0.7\textwidth}{!}{
\begin{tikzpicture}[scale=0.0575]

    \draw (5,180) rectangle (90,190) node[pos=.5] {\small{Fixed points $n_{_p}^{(\ell)}$, $n_{_p}^{(\ell+1)}$, and $n_{_p}^{(\ell+2)}$}};
    
    \draw[dashed,red,->] (47.5,180) -- (4.5,6);
    
    \draw[dashed,red,->] (47.5,180) -- (101,104);
    
    \draw[dashed,red,->] (90,185) -- (227,229);
    
    \draw (61,5) rectangle (170,15) node[pos=.5] {$\{n_{_e}^{(\ell_i)}\}_{i=1}^{7}$ h.e.p. between $n_{_p}^{(\ell)}$ and $n_{_p}^{(\ell +1)}$};
    
    \draw[dashed,blue,->] (60,7.5) -- (7.5,5);
    
    \draw[dashed,blue,->] (60,7.5) -- (10.5,7.6);
    
    \draw[dashed,blue,->] (60,12.5) -- (15.25,12.5);
    
    \draw[dashed,blue,->] (60,12.5) -- (21.5,18.5);
    
    \draw[dashed,blue,->] (100,15) -- (32,29);
    
    \draw[dashed,blue,->] (100,15) -- (47,43);
    
    \draw[dashed,blue,->] (100,15) -- (69,65);
    
    \draw (120,80) rectangle (230,90) node[pos=.5] {$n_{_e}^{((\ell+1)_1)}$ h.e.p. between $n_{_p}^{(\ell+1)}$ and $n_{_p}^{(\ell+2)}$};
    
    \draw[dashed,blue,->] (165,90) -- (153.5,150);
    
    \draw[black,<->] (-5,0) -- (245,0);
    
    \draw[black,<->] (0,-5) -- (0,245);
    
    \draw[dashed,black,->] (0,0) -- (245,245);

    \node at (3,-6) {$1$};

    \node at (1,0) {|};
    
    \node at (20,-6) {$20$};

    \node at (20,0) {|};
    
    \node at (40,-6) {$40$};

    \node at (40,0) {|};
    
    \node at (60,-6) {$60$};

    \node at (60,0) {|};
    
    \node at (80,-6) {$80$};

    \node at (80,0) {|};
    
    \node at (100,-6) {$100$};

    \node at (100,0) {|};
    
    \node at (120,-6) {$120$};

    \node at (120,0) {|};
    
    \node at (140,-6) {$140$};

    \node at (140,0) {|};
    
    \node at (160,-6) {$160$};

    \node at (160,0) {|};
    
    \node at (180,-6) {$180$};

    \node at (180,0) {|};
    
    \node at (200,-6) {$200$};

    \node at (200,0) {|};
    
    \node at (220,-6) {$220$};   
    
    \node at (220,0) {|};
    
    \node at (238,-6) {$(\times 10^6)$};

    \node at (252,-1) {$n$};
    
    \node at (-6,3) {$1$};

    \node at (0,1) {{\bf--}};
        
    \node at (-6,20) {$20$};

    \node at (0,20) {{\bf--}};
    
    \node at (-6,40) {$40$};

    \node at (0,40) {{\bf--}};
    
    \node at (-6,60) {$60$};

    \node at (0,60) {{\bf--}};
    
    \node at (-6,80) {$80$};

    \node at (0,80) {{\bf--}};
    
    \node at (-7.5,100) {$100$};

    \node at (0,100) {{\bf--}};
    
    \node at (-7.5,120) {$120$};

    \node at (0,120) {{\bf--}};
    
    \node at (-7.5,140) {$140$};

    \node at (0,140) {{\bf--}};
    
    \node at (-7.5,160) {$160$};

    \node at (0,160) {{\bf--}};
    
    \node at (-7.5,180) {$180$};

    \node at (0,180) {{\bf--}};
    
    \node at (-7.5,200) {$200$};

    \node at (0,200) {{\bf--}};
    
    \node at (-7.5,220) {$220$};

    \node at (0,220) {{\bf--}};

    \node at (-10,235) {$(\times 10^6)$};

    \node at (-1,250) {$J_3(n)$};
    
    \node[circle,color=black, fill=red, inner sep=0pt,minimum size=4pt,label=below right:{}] (b) at (3.9,3.9) {};
    
    \node[circle,color=black, fill=blue, inner sep=0pt,minimum size=3pt,label=below right:{}] (b) at (5.9,4.9) {};
    
    \node[circle,color=black, fill=blue, inner sep=0pt,minimum size=3pt,label=below right:{}] (b) at (8.9,7.9) {};
    
    \node[circle,color=black, fill=blue, inner sep=0pt,minimum size=3pt,label=below right:{}] (b) at (13.5,12.5) {};
    
    \node[circle,color=black, fill=blue, inner sep=0pt,minimum size=3pt,label=below right:{}] (b) at (20.2,19.2) {};
    
    \node[circle,color=black, fill=blue, inner sep=0pt,minimum size=3pt,label=below right:{}] (b) at (30.4,29.4) {};
    
    \node[circle,color=black, fill=blue, inner sep=0pt,minimum size=3pt,label=below right:{}] (b) at (45.4,44.4) {};
    
    \node[circle,color=black, fill=blue, inner sep=0pt,minimum size=3pt,label=below right:{}] (b) at (68.1,67.1) {};
    
    \node[circle,color=black, fill=red, inner sep=0pt,minimum size=4pt,label=below right:{}] (b) at (102.2,102.2) {};
    
    \node[circle,color=black, fill=blue, inner sep=0pt,minimum size=3pt,label=below right:{}] (b) at (153.2,152.2) {};
    
    \node[circle,color=black, fill=red, inner sep=0pt,minimum size=4pt,label=below right:{}] (b) at (229,229) {};

    \node at (250,250) {$J_{_3}(n)=n$};
   
    \end{tikzpicture}}
    \caption{Fixed points $n_{_p}^{(\ell)}$, $n_{_p}^{(\ell+1)}$ and $n_{_p}^{(\ell+2)}$ with $\ell=17$ and pure high extremal points (h.e.p.) between them.}
    \label{fig-3-fixed}
\end{figure}

The above Figure \ref{fig-3-fixed} illustrates three consecutive fixed points  $n_{_p}^{(\ell)}=\num{3986218}$,  $n_{_p}^{(\ell+1)}=\num{102162424}$, and $n_{_p}^{(\ell+2)}=\num{229865455}$ and the pure high extremal points between them. The pure high extremal points are represented by blue dots and the fixed points by red dots. 

Our goal is to find a procedure to compute consecutive fixed points skipping the pure high extremal points in between. Next, we present a recurrence formula for the fixed points of $J_{_3}$ that uses the number $\overline{m}_{\ell}$, which is the main result of this section.

\begin{theorem}[Recurrence Formula for Fixed Points]\label{recurrence-formula}
Let $\{n_{_p}^{(\ell)}\}_{\ell\in\mathbb{N}}$ be the sequence of fixed points for $J_{_3}$, initialized with $n_{_p}^{(1)} = 1$. Then, the number of pure high extremal points between $n_{_p}^{(\ell)}$ and $n_{_p}^{(\ell+1)}$, $\overline{m}_{\ell}$, is given by the following formula:
\begin{equation}\label{m-l-formula}\overline{m}_{\ell} = \max\left\{m \in \mathbb{Z}_{+} \mid 2^m \text{ divides } 3n_{_p}^{(\ell)} + 2\right\} \text{ for each } \ell,\end{equation}
and the following recurrence formula holds for $\ell \in \mathbb{N}$: 
\begin{equation}\label{star-formula}
n_{_p}^{(\ell+1)} = \dfrac{3^{\overline{m}_{\ell}}(3n_{_p}^{(\ell)} + 2) - 2^{\overline{m}_{\ell}}}{2^{\overline{m}_{\ell} + 1}}.
\end{equation}
\end{theorem}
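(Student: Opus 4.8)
The plan is to iterate the four-case transition of Theorem \ref{Thm-cases} starting from the fixed point $n_{_p}^{(\ell)}$ and to count exactly how many pure high extremal points are produced before the next fixed point appears. Since $n_{_p}^{(\ell)}$ has $f=0$, its behavior is governed by cases (iii) and (iv). First I would dispose of the odd case: if $n_{_p}^{(\ell)}$ is odd, then $r=1$ and case (iv) sends it directly to another fixed point, so no pure point occurs and $\overline{m}_{\ell}=0$; since $3n_{_p}^{(\ell)}+2$ is then odd, this matches \eqref{m-l-formula}, and \eqref{star-formula} with $\overline{m}_{\ell}=0$ collapses to $(3n_{_p}^{(\ell)}+1)/2$, which is precisely the value produced by (iv).

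For the substantive case I would assume $n_{_p}^{(\ell)}$ is even. Then case (iii) produces a first pure high extremal point $a_1=3n_{_p}^{(\ell)}/2$ (with $f=1$). From a pure point, cases (i) and (ii) apply: while the current pure point is odd, case (ii) yields a further pure point via $a_{j+1}=(3a_j+1)/2$, and as soon as a pure point is even, case (i) produces the next fixed point via $(3a_j+2)/2$. Hence the pure high extremal points strictly between the two consecutive fixed points are exactly $a_1,\dots,a_m$, where $a_1,\dots,a_{m-1}$ are odd, $a_m$ is even, and $n_{_p}^{(\ell+1)}=(3a_m+2)/2$.

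The key computational device is to linearize the affine recurrence $a_{j+1}=(3a_j+1)/2$. Its fixed point is $-1$, so the substitution $b_j:=a_j+1$ turns it into the purely multiplicative relation $b_{j+1}=3b_j/2$ with initial value $b_1=(3n_{_p}^{(\ell)}+2)/2$, whose closed form is $b_j=3^{\,j-1}(3n_{_p}^{(\ell)}+2)/2^{\,j}$. Because $3^{\,j-1}$ is odd, $b_j$ is a (positive) integer precisely when $2^{\,j}$ divides $3n_{_p}^{(\ell)}+2$, and such a $b_j$ is odd exactly when $2^{\,j}$ divides $3n_{_p}^{(\ell)}+2$ but $2^{\,j+1}$ does not. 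Translating back through $a_j=b_j-1$, the chain continues ($a_j$ odd, i.e.\ $b_j$ even) as long as $2^{\,j+1}$ divides $3n_{_p}^{(\ell)}+2$, and terminates at the first even $a_m$ (that is, $b_m$ odd). This forces $m$ to equal the $2$-adic valuation $v_2\!\left(3n_{_p}^{(\ell)}+2\right)$, namely the quantity in \eqref{m-l-formula}; since $3n_{_p}^{(\ell)}+2$ is a fixed positive integer, this valuation is finite, which guarantees that the pure chain terminates and the next fixed point indeed exists.

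Finally I would substitute back to recover \eqref{star-formula}: from $n_{_p}^{(\ell+1)}=(3a_m+2)/2=(3b_m-1)/2$ together with the closed form for $b_m$ evaluated at $m=\overline{m}_{\ell}$, one obtains $n_{_p}^{(\ell+1)}=\bigl(3^{\overline{m}_{\ell}}(3n_{_p}^{(\ell)}+2)-2^{\overline{m}_{\ell}}\bigr)/2^{\overline{m}_{\ell}+1}$. The main obstacle is the bookkeeping of the third paragraph: one must argue cleanly that each successive $a_j$ remains a genuine integer high extremal point and that the parities of the whole chain are dictated throughout by the single invariant $v_2\!\left(3n_{_p}^{(\ell)}+2\right)$, so that the run of pure points has exactly the asserted length. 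Once this $2$-adic valuation is isolated as the controlling quantity, the remaining manipulations are routine algebra.
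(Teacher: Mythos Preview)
Your proposal is correct and follows essentially the same route as the paper: split on the parity of $n_{_p}^{(\ell)}$, then in the even case apply Theorem~\ref{Thm-cases}(iii) once and (ii) repeatedly until an even pure point triggers (i), with the run length governed by $v_2(3n_{_p}^{(\ell)}+2)$. Your linearizing substitution $b_j=a_j+1$ is a clean way to derive the closed form $a_j=3^{\,j-1}(3n_{_p}^{(\ell)}+2)/2^{\,j}-1$ that the paper simply asserts, and your direct reading of the parity of $a_j$ from the $2$-adic valuation replaces the paper's contradiction argument for the evenness of the terminal pure point, but structurally the two arguments coincide.
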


\begin{proof} For simplicity, we set $\overline{m}$ instead of $\overline{m}_{\ell}$ for every fixed $\ell\in \mathbb{N}$. 
    
First, when $\max\left\{m \in \mathbb{Z}_{+} \mid 2^m \text{ divides } 3n_{_p}^{(\ell)} + 2\right\}=0$, $3n_{_p}^{(\ell)} + 2$ is odd. This implies that $n_{_p}^{(\ell)}$ is also odd. Then, according to Theorem \ref{Thm-cases}(iv), the subsequent high extremal point is a fixed point, which means that there are not pure high extremal points in between, i.e.,  $\overline{m}=0$. Hence, equation \eqref{m-l-formula} holds and the next fixed point is given by: 
\[n_{_p}^{(\ell+1)} = \dfrac{3n_{_p}^{(\ell)} + 1}{2},\]
which aligns with the expression in \eqref{star-formula} for $\overline{m} = 0$.  
    
Next, if $\max\left\{m \in \mathbb{Z}_{+} \mid 2^m \text{ divides } 3n_{_p}^{(\ell)} + 2\right\}=1$, $3n_{_p}^{(\ell)} + 2$ is divisible only by $2$. Therefore, $n_{_p}^{(\ell)}$ is even. It follows from Theorem \ref{Thm-cases}(iii) that the next high extremal point is pure and satisfies:
\[n_{_e}^{(\ell_1)} = \dfrac{3n_{_p}^{(\ell)}}{2}.\] 
Since $3n_{_p}^{(\ell)} + 2$ is not divisible by $4$, $n_{_e}^{(\ell_1)}$ is even. Invoking Theorem \ref{Thm-cases}(i), the next high extremal point is actually a fixed point, which means that the number of pure high extremal points is $1$ ($\overline{m}=1$), and hence equation \eqref{m-l-formula} holds. Moreover, the next fixed point is given by: 
\[n_{_p}^{(\ell+1)} = n_{_e}^{(\ell_1 + 1)} = \dfrac{3n_{_e}^{(\ell_1)} + 2}{2} = \dfrac{3^1(3n_{_p}^{(\ell)} + 2) - 2^1}{2^2}.\] 
This confirms the validity of formula \eqref{star-formula} for $\overline{m}=1$.

Let $\check m:=\max\left\{m \in \mathbb{Z}_{+} \mid 2^m \text{ divides } 3n_{_p}^{(\ell)} + 2\right\}$. Assume that $\check m\ge 2$. So, it is possible to apply Theorem \ref{Thm-cases}(iii) once and Theorem \ref{Thm-cases}(ii) $\check m-1$ times, to get that there is an increasing sequence of pure high extremal points, $\{n_{_e}^{(\ell_i)}\}_{i=1}^{\check m}$, between $n_{_p}^{(\ell)}$ and $n_{_p}^{(\ell+1)}$. The last pure extremal point, $n_{_e}^{(\ell_{\check{m}})}$, is computed by:
\begin{equation}\label{ne-mbar}n_{_e}^{(\ell_{\check{m}})} = \dfrac{3^{\check{m}-1}(3n_{_p}^{(\ell)}+2)}{2^{\check{m}}} - 1.\end{equation}
We claim now that $n_{_e}^{(\ell_{\check{m}})}$ is even. A contrary assumption implies that $3^{\check{m}-1}(3n_{_p}^{(\ell)}+2)/2^{\check{m}}$ is divisible by $2$. This in turn yields that $3n_{_p}^{(\ell)}+2$ is divisible by a power of $2$ greater than $2^{\check{m}}$, which is a contradiction with the definition of $\check{m}$. Applying Theorem \ref{Thm-cases}(i), the next high extremal point is actually a fixed point, which proves that $\check{m}=\overline{m}$ and equation \eqref{m-l-formula} holds. Moreover, the next fixed point is given by:
\begin{equation}\label{last-eq-used}n_{_p}^{(\ell+1)} = n_{_e}^{(\ell_{\overline{m}}+1)} = \dfrac{3n_{_e}^{(\ell_{\overline{m}})} + 2}{2}.\end{equation}
However, examining our main recurrence formula, we deduce:
\begin{align*}
\dfrac{3^{\overline{m}}(3n_{_p}^{(\ell)}+2)-2^{\overline{m}}}{2^{\overline{m}+1}} &= \dfrac{\dfrac{3^{\overline{m}}(3n_{_p}^{(\ell)}+2)}{2^{\overline{m}}}-1}{2} \\
&= \dfrac{3\left(\dfrac{3^{\overline{m}-1}(3n_{_p}^{(\ell)}+2)}{2^{\overline{m}}}-1\right)+2}{2}\\
&=\dfrac{3n_{_e}^{(\ell_{\overline{m}})}+2}{2}\\&=n_{_p}^{(\ell+1)},
\end{align*} where we use \eqref{ne-mbar} in the third equality and \eqref{last-eq-used} in the last equality. This completes the proof of the theorem.
\end{proof}

Now we present a formula that allows us to solve the Josephus problem for a given $n$ using only fixed points.   
\begin{theorem}[Expression for Evaluating the Josephus Function]\label{Josephus-function-at-N}
Let \( n_{_p}^{(\ell)} \) be a fixed point of \( J_{_3} \) and let \(n\) be in the interval \([[n_{_p}^{(\ell)}+1, n_{_p}^{(\ell+1)}]]\). Define
\begin{equation}\label{I-ell}
\mathfrak{m}:=\left \lceil \log_{3/2}\left (\dfrac{2n+1}{3n_{_p}^{(\ell)}+2}\right) \right \rceil. 
\end{equation}
Then, we have
\begin{equation}\label{EQ1000}
J_{_3}(n)=3n+1-\left(\dfrac{2}{3} \right)^{\overline{m}_{\ell}-\mathfrak{m}}(2n_{_p}^{(\ell+1)}+1).
\end{equation}
\end{theorem}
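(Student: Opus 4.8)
The plan is to exploit the piecewise-linear, slope-$3$ structure of $J_{_3}$ between consecutive high extremal points and to show that $\mathfrak{m}$ is exactly the index of the linear piece on which $n$ lies. Write $\overline{m}=\overline{m}_{\ell}$ and $n_p=n_{_p}^{(\ell)}$, and set $n_{_e}^{(\ell_0)}:=n_{_p}^{(\ell)}$ and $n_{_e}^{(\ell_{\overline{m}+1})}:=n_{_p}^{(\ell+1)}$, so that the high extremal points in $[[n_{_p}^{(\ell)},n_{_p}^{(\ell+1)}]]$ are $n_{_e}^{(\ell_0)},n_{_e}^{(\ell_1)},\dots,n_{_e}^{(\ell_{\overline m+1})}$. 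First I would record the explicit values of the $\overline m$ pure high extremal points obtained in the proof of Theorem~\ref{recurrence-formula} by applying Theorem~\ref{Thm-cases}(iii) once and (ii) $\overline m-1$ times:
\[
n_{_e}^{(\ell_j)}=\frac{3^{\,j-1}(3n_p+2)}{2^{\,j}}-1,\qquad j=1,\dots,\overline m,
\]
so that $2n_{_e}^{(\ell_j)}+1=(3/2)^{\,j-1}(3n_p+2)-1$. I would also note the identity $2n_{_p}^{(\ell+1)}+1=(3/2)^{\overline m}(3n_p+2)$, obtained by clearing denominators in \eqref{star-formula}; this rewrites the target \eqref{EQ1000} in the cleaner form
\[
J_{_3}(n)=3n+1-(3/2)^{\mathfrak{m}}(3n_p+2).
\]

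Next I would describe $J_{_3}$ on each linear piece. Immediately after a high extremal point the value of $J_{_3}$ drops to $1$ or $2$ and then rises with slope $3$, reaching $J_{_3}(n_{_e}^{(\ell_{t+1})})=n_{_e}^{(\ell_{t+1})}-f_{t+1}$ at the right end, where $f_{t+1}\in\{0,1\}$ is the corresponding fixed-point indicator. Hence on the segment $n\in[[\,n_{_e}^{(\ell_t)}+1,\ n_{_e}^{(\ell_{t+1})}\,]]$ one has $J_{_3}(n)=3n-2n_{_e}^{(\ell_{t+1})}-f_{t+1}$; substituting the explicit values above, with $f_{t+1}=1$ for $t+1\le\overline m$ and $f_{\overline m+1}=0$, collapses this uniformly to
\[
J_{_3}(n)=3n+1-(3/2)^{\,t}(3n_p+2)
\]
on every segment $t\in\{0,1,\dots,\overline m\}$, the last one included. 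Since these segments tile $[[n_{_p}^{(\ell)}+1,n_{_p}^{(\ell+1)}]]$, it remains only to identify $t$ with $\mathfrak m$.

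The crux is to prove that $\mathfrak m=t$ for every $n$ in the $t$-th segment, i.e.\ that $(3/2)^{\,t-1}<\tfrac{2n+1}{3n_p+2}\le(3/2)^{\,t}$. As $2n+1$ is increasing in $n$, it suffices to test the two endpoints. At the left endpoint $n=n_{_e}^{(\ell_t)}+1$ the recorded value gives $2n+1=(3/2)^{\,t-1}(3n_p+2)+1$ for $t\ge1$ (and a direct check $2n+1=2n_p+3>\tfrac23(3n_p+2)$ for $t=0$), which is strictly above the lower threshold; at the right endpoint $n=n_{_e}^{(\ell_{t+1})}$ one gets $2n+1=(3/2)^{\,t}(3n_p+2)-1<(3/2)^{\,t}(3n_p+2)$ for $t\le\overline m-1$, while for the final segment $t=\overline m$ the right endpoint is $n_{_p}^{(\ell+1)}$ and $2n+1$ equals the threshold $(3/2)^{\overline m}(3n_p+2)$ exactly. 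In every case the ceiling equals $t$, and combining with the segment formula and the identity $2n_{_p}^{(\ell+1)}+1=(3/2)^{\overline m}(3n_p+2)$ yields \eqref{EQ1000}. The main obstacle is precisely this last step: one must track the strict-versus-nonstrict inequalities through the ceiling with care, handle the boundary segment $t=\overline m$ (where the right endpoint meets the threshold exactly) and the degenerate case $\overline m=0$ (where $n_p$ is odd, Theorem~\ref{Thm-cases}(iv) applies, and there is a single segment) separately, and confirm that each admissible $n$ falls in exactly one segment so that $\mathfrak m$ is well defined and equal to $t$.
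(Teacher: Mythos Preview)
Your proposal is correct and follows essentially the same approach as the paper: both partition $[[n_{_p}^{(\ell)}+1,n_{_p}^{(\ell+1)}]]$ into the $\overline m+1$ segments between consecutive high extremal points, use the explicit values $n_{_e}^{(\ell_j)}=3^{j-1}(3n_p+2)/2^{j}-1$ from the proof of Theorem~\ref{recurrence-formula}, evaluate $J_{_3}$ on each segment via the slope-$3$ relation $J_{_3}(n)=3n-2n_{_e}^{(\ell_{t+1})}-f_{t+1}$ (the paper's citation of equation~(16) in \cite{Bel}), and check the endpoint inequalities to identify $\mathfrak m$ with the segment index. Your presentation is slightly more streamlined---by first deriving the uniform segment formula $J_{_3}(n)=3n+1-(3/2)^{t}(3n_p+2)$ and using the identity $2n_{_p}^{(\ell+1)}+1=(3/2)^{\overline m}(3n_p+2)$ up front, you avoid the paper's separate treatment of Case~1, Subcase~2a, 2b, 2c---but the underlying argument is the same.
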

\begin{proof} 
For clarity, let us use $\overline{m}$ instead of $\overline{m}_{\ell}$ for every fixed $\ell\in \mathbb{N}$. We divide the proof in two cases:

\noindent {\bf Case 1:} If $\overline{m}=0$, there are no pure high extremal points between $n_{_p}^{(\ell)}$ and $n_{_p}^{(\ell+1)}$ and formula \eqref{star-formula} of Theorem \ref{recurrence-formula} implies that
\(n_{_p}^{(\ell+1)}=(3n_{_p}^{(\ell)}+1)/2\) which combined with the fact that \[n_{_p}^{(\ell)}+\frac{1}{6}< n\le n_{_p}^{(\ell+1)}\] yields the inequality:
\[\left (\dfrac{3}{2}\right)^{-1}=\dfrac{2}{3}<\dfrac{2n+1}{3n_{_p}^{(\ell)}+2} \leq 1 =\left (\dfrac{3}{2}\right)^{0}.\]
Taking the logarithm base $3/2$ on each side of the last inequality and using the definition of $\mathfrak{m}$, we get that $\mathfrak{m}=0$. Now using equation (16) from Corollary 7 of \cite{Bel}, we find $J_{_3}(n)=3n-2n_{_p}^{(\ell+1)}$, which coincides with equation \eqref{EQ1000} when $\overline{m}=\mathfrak{m}=0$. This validates \eqref{EQ1000} for this case.

\noindent {\bf Case 2:} Assume that $\overline{m}\ge 1$. According to Theorem \ref{recurrence-formula}, there are exactly $\overline{m}$ pure high extremal points denoted $n_{_e}^{(\ell_1)}, \ldots, n_{_e}^{(\ell_{\overline{m}})}$ between $n_{_p}^{(\ell)}$ and $n_{_p}^{(\ell+1)}$. Also,  denote ${\rm I}_{1}:=[[n_{_p}^{(\ell)}+1,n_{_e}^{(\ell_1)}]]$, ${\rm I}_{j}:=[[n_{_e}^{(\ell_{j-1})}+1,n_{_e}^{(\ell_j)}]]$ for $j=2,\dots, \overline{m}$, and ${\rm I}_{{\overline{m}+1}}:=[[n_{_e}^{(\ell_{\overline{m}})}+1,n_{_p}^{(\ell+1)}]]$. Additionally, Theorem \ref{Thm-cases}(iii) implies
\begin{equation}\label{n-e-ell-1} n_{_e}^{(\ell_1)}=3n_{_p}^{(\ell)}/2.\end{equation} Moreover, it is possible to apply Theorem \ref{Thm-cases}(iii) once and Theorem \ref{Thm-cases}(ii) $j-1$ times, to get that 
\begin{equation}\label{n-e-ell-j}
n_{_e}^{(\ell_{j})}=\dfrac{3^{j-1}(3n_{_p}^{(\ell)}+2)}{2^{j}}-1
\end{equation} for $j=2,\ldots,\overline{m}$, 
and Theorem \ref{recurrence-formula} for $\overline{m}$ yields \begin{equation}\label{n-p-ell-1} n_{_p}^{(\ell+1)}=\dfrac{3^{\overline{m}}(3n_{_p}^{(\ell)} + 2) - 2^{\overline{m}}}{2^{\overline{m} + 1}}.\end{equation}

Next, we analyze three subcases:

{\bf Subcase 2a:} $n\in {\rm I}_{1}:=[[n_{_p}^{(\ell)}+1,n_{_e}^{(\ell_1)}]]$. Using a similar argument as {\bf Case 1}, we obtain the following inequality:
$$\left (\dfrac{3}{2}\right)^{-1}=\dfrac{2}{3}<\dfrac{2n+1}{3n_{_p}^{(\ell)}+2} \leq 1 =\left (\dfrac{3}{2}\right)^{0}.$$ This inequality together with the definition of $\mathfrak{m}$ imply that $\mathfrak{m}=0$. It follows from  equation (16) in Corollary 7 of \cite{Bel} and the fact that $n_{_e}^{(\ell_1)}$ is a pure extremal point, i.e., $J_{_3}(n_{_e}^{(\ell_1)})= n_{_e}^{(\ell_1)}-1$ that
\begin{align*}
J_{_3}(n)&=3(n-n_{_e}^{(\ell_1)})+J_{_3}(n_{_e}^{(\ell_1)}) \\
                      &=3n-2n_{_e}^{(\ell_1)}-1 \\
                      &=3n-3n_{_p}^{(\ell)}-1 \\
                      &=3n-3\left(\frac{1}{3}\left(\dfrac{2^{\overline{m}+1}n_{_p}^{\ell+1}+2^{\overline{m}}}{3^{\overline{m}}}-2\right)\right)-1 \\
                      &=3n+1-\left(\dfrac{2}{3}\right)^{\overline{m}}(2n_{_p}^{(\ell+1)}+1),
\end{align*} where we have used \eqref{n-e-ell-1} in the third equality and \eqref{n-p-ell-1} in the fourth equality. Therefore, the validity of equation \eqref{EQ1000} is proved.

{\bf Subcase 2b:} $n\in {\rm I}_{j}=[[n_{_e}^{(\ell_{j-1})}+1,n_{_e}^{(\ell_j)}]]$ for $j=2,\dots, \overline{m}$. In this case, we use \eqref{n-e-ell-j} and the same formula at $j-1$ for any $j-1=2,\ldots, \overline{m}-1$ (when $j-1=1$, we use \eqref{n-e-ell-1}) together with the fact that $n_{_e}^{(\ell_{j-1})}<n\le n_{_e}^{(\ell_j)}$ for $j=2,\ldots,\overline{m}$, to prove that
\[\left (\dfrac{3}{2}\right)^{j-2}<\dfrac{2n+1}{3n_{_p}^{(\ell)}+2} \leq \left (\dfrac{3}{2}\right)^{j-1}.\]

Taking the logarithm base $3/2$ of the last inequality and using the definition of $\mathfrak{m}$, we get that $\mathfrak{m}=j-1$. Using now equation (16) in Corollary 7 of \cite{Bel} and the fact that $n_{_e}^{(\ell_j)}$ is a pure high extremal point, we obtain 
\begin{align*}
J_{_3}(n)&=3(n-n_{_e}^{(\ell_{j})})+J_{_3}(n_{_e}^{(\ell_{j})}) \\
                      &=3n-2n_{_e}^{(\ell_{j})}-1 \\
                      &=3n-2\left( \dfrac{3^{j-1}(3n_{_p}^{(\ell)}+2)}{2^{j}}-1\right)-1 \\
                      &=3n+1-\left(\dfrac{3}{2}\right)^{\mathfrak{m}}(3n_{_p}^{(\ell)}+2) \\
                      &=3n+1-\left(\dfrac{3}{2}\right)^{\mathfrak{m}}\left (\dfrac{2^{\overline{m}+1}n_{_p}^{(\ell+1)}+2^{\overline{m}}}{3^{\overline{m}}}\right )\\
                      &=3n+1-\left(\dfrac{3}{2}\right)^{\mathfrak{m}}\left(\dfrac{2}{3}\right)^{\overline{m}}(2n_{_p}^{(\ell+1)}+1)\\
                      &=3n+1-\left(\dfrac{2}{3}\right)^{\overline{m}-\mathfrak{m}}(2n_{_p}^{(\ell+1)}+1),
\end{align*}
where we have used \eqref{n-e-ell-j} in the third equality and \eqref{n-p-ell-1} in the fifth equality. Thus, equation \eqref{EQ1000} holds.

{\bf Subcase 2c:} $n \in {\rm I}_{\overline{m}+1}:=[[n_{_e}^{(\ell_{\overline{m}})}+1,n_{_p}^{(\ell+1)}]]$. Similarly using now that $n_{_e}^{(\ell_{\overline{m}})} = (3^{\overline{m}-1}(3n_{_p}^{(\ell)}+2))/2^{\overline{m}} - 1$ and \eqref{star-formula} from Theorem \ref{recurrence-formula} to prove  $$\left (\dfrac{3}{2}\right)^{\overline{m}-1}<\dfrac{2n+1}{3n_{_p}^{(\ell)}+2} \leq \left (\dfrac{3}{2}\right)^{\overline{m}}.$$ Taking the logarithm base $3/2$ on each side of the last inequality and using the definition of $\mathfrak{m}$, we have that $\mathfrak{m}=\overline{m}$. Equation \eqref{EQ1000} holds similarly to {\bf Case 1} since we again can use equation (16) from Corollary 7 of \cite{Bel} to write $J_{_3}(n)=3n-2n_{_p}^{(\ell+1)}$, which coincides with equation \eqref{EQ1000} when $\mathfrak{m}=\overline{m}$. 
\end{proof}

Note that the last two theorems are very powerful. Next, we present a direct application of those results for evaluating the Josephus function at a large value of $n$ and a new expression for computing the number of pure high extremal points between fixed points.

\begin{remark}[Evaluating $J_{_3}(\num{50000000})$]
Given $n_{_{p}}^{(17)}=\num{3986218}$ taken from Table \ref{fixed-points-values}, we can effortlessly compute the next fixed point $n_{_{p}}^{(18)}$ using the recurrence formula \eqref{star-formula}:
$$n_{_{p}}^{(18)}=\dfrac{3^{\overline{m}_{17}}(3n_{_{p}}^{(17)}+2)-2^{\overline{m}_{17}}}{2^{\overline{m}_{17}+1}}=\dfrac{3^{7}(3(\num{3986218})+2)-2^{7}}{2^{8}}=\num{102162424},$$
where $\overline{m}_{17}=7$. Moreover, we can evaluate $J_{_3}$ by hand at $n=\num{50000000}$ as
$$J_{_3}(\num{50000000})= 3(\num{50000000})+1-\left(\dfrac{2}{3} \right)^{\overline{m}_{17}-\mathfrak{m}}(2(\num{102162424})+1)=\num{13783435},$$
after observing that $$\mathfrak{m}=\left \lceil \log_{3/2}\left (\dfrac{2n+1}{3n_{_p}^{(17)}+2}\right) \right\rceil=\left\lceil\log_{3/2}\left(\dfrac{2(\num{50000000})+1}{3(\num{3986218})+2}\right)\right\rceil=\lceil5.2377252342894725\rceil=6.$$
\end{remark}

\begin{remark}[A New Formula for Computing $\overline{m}_{\ell}$]
We can use the formula for $J_{_3}(n)$ in terms of the fixed points of $J_{_3}$ provided in \eqref{EQ1000} of Theorem \ref{Josephus-function-at-N} to compute 
$$J_{_3}(n_{_p}^{(\ell+1)})=3n_{_p}^{(\ell+1)}+1-\left(\dfrac{2}{3} \right)^{\overline{m}_{\ell}-\mathfrak{m}}(2n_{_p}^{(\ell+1)}+1)=n_{_p}^{(\ell+1)},$$
which implies
$$\left (1-\left (\dfrac{2}{3} \right )^{\overline{m}_{\ell}-\mathfrak{m}} \right)(2n_{_p}^{(\ell +1)}+1 )=0.$$ Hence, $\overline{m}_{\ell}=\mathfrak{m}$ and
$${\overline{m}_{\ell}}=\left \lceil \log_{3/2}\left (\dfrac{2n_{_p}^{(\ell+1)}+1}{3n_{_p}^{(\ell)}+2}\right) \right \rceil.$$
\end{remark}

\subsection{A Fixed Point Algorithm for Solving the Josephus Problem}

To solve the Josephus problem, we propose the {\it fixed point} algorithm. This strategy computes recursively the fixed points $n^{(\ell)}_{_p}$ for $\ell=1,2,\ldots,q$ until $n^{(q)}_{_p}$ is greater than or equal to $n$. This is detailed in Theorem \ref{recurrence-formula}. Subsequently, $J_{_3}$ is evaluated at $n$ as given by Theorem \ref{Josephus-function-at-N}. The iteration starts with $n_{_p}^{(1)}=1$ and from $\ell=1,2,\ldots$ until $n_{_e}^{(\ell+1)}\geq n$, we compute:
\begin{equation*}
n_{_{p}}^{(\ell+1)} = \dfrac{3^{\overline{m}_{\ell}}(3n_{_{p}}^{(\ell)}+2)-2^{\overline{m}_{\ell}}}{2^{\overline{m}_{\ell}+1}},
\end{equation*}
where $\overline{m}_{\ell}= \max\left\{m \in \mathbb{Z}_{+} \mid 2^m \text{ divides } 3n_{_p}^{(\ell)} + 2\right\}$. Then, we evaluate
$$J_{_3}(n)=3n+1-\left(\dfrac{2}{3} \right)^{\overline{m}_{\ell}-\mathfrak{m}}(2n_{_p}^{(\ell+1)}+1),$$
where $\mathfrak{m}=\left \lceil \log_{3/2}\left (\dfrac{2n+1}{3n_{_p}^{(\ell)}+2}\right) \right \rceil$.

Comparing the fixed point algorithm with the extremal algorithm introduced in \cite{Bel}, our collected data suggests that the former is approximately \(50\%\) more efficient than the latter. Notably, for even fixed points, the number of pure high extremal points, \(\overline{m}_{\ell}\), tends to \(1\) as \(n\) becomes sufficiently large. This indicates that the number of extremal points is approximately the double of the number of fixed points. As a result, the extremal algorithm requires roughly twice the number of iterations as the fixed point algorithm. For example, to compute \(J_{_3}(\num{50000000})\), the fixed point algorithm requires the computation of the first \(18\) fixed points. On the other hand, the extremal algorithm also needs to compute \(\sum_{\ell=1}^{17}\overline{m}_{\ell}=27\) pure extremal points, leading to a total of \(45\) high extremal points (including the fixed points). In this scenario, the fixed point algorithm is approximately \(52\%\) faster than the extremal algorithm. However, as \(n\) increases, this percentage will approach \(50\%\). Next, we graph the function 
\[
r(q)=\sum_{\ell=1}^{q-1} \left(1-\frac{q}{(\overline{m}_{\ell}+q)}\right) \times 100\%,
\] which represents the percent of gain of the fixed point algorithm over the extremal algorithm for the values \(q=1,2,\ldots,39\) from Table \ref{fixed-points-values} and \(q=1,2,\ldots,200\).

\begin{figure}[h]
    \centering
    \begin{subfigure}[b]{0.495\textwidth}
        \centering
        \includegraphics[width=\textwidth]{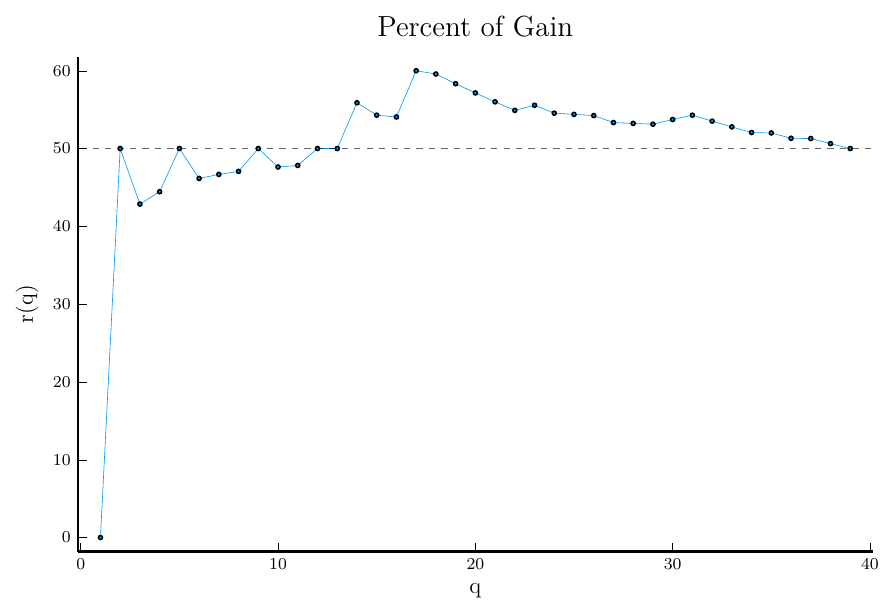}
        \caption{Graph of $r(q)$ the percentage gain for \(q=1,2,\ldots,39\).}
        \label{f3_sub}
    \end{subfigure}
    \hfill
    \begin{subfigure}[b]{0.495\textwidth}
        \centering
        \includegraphics[width=\textwidth]{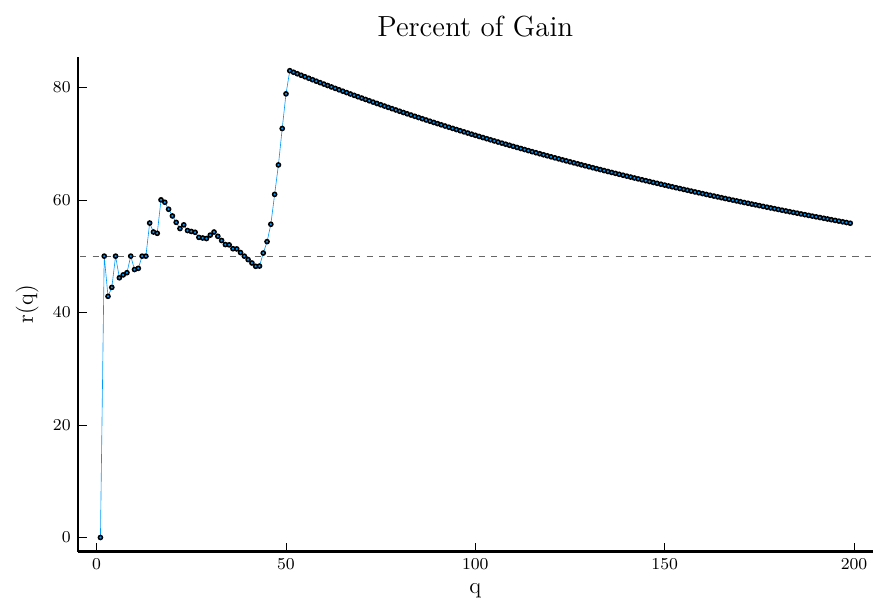}
        \caption{Graph of $r(q)$ the percentage gain for \(q=1,2,\ldots,200\).}
        \label{f4_sub}
    \end{subfigure}
    \caption{Graphs of $r(q)$ showing the percentage gain of the fixed point algorithm.}
    \label{fig:combined_figure}
\end{figure}

% The Extremal algorithm  has the following JULIA code:
% If $n_{_e}^{(i)}< n \leq n_{_e}^{(i+1)}$, then $
% J_{_k}(n)=J_{_k}(n_{_e}^{(i+1)})+k(n-n_{_e}^{(i+1)}).
% $

% The Extremal algorithm  has the following JULIA code: 

% %\begin{code}
% %{\large
% %\begin{jllisting}
% \begin{verbatim}
% function josephus_Extremal(n::Int, k::Int)
%     j = josephus_Knuth(2*k - 2, k)
%     (ne, j_ne)= ifelse(j <= k - 1, (2*k - 3, j + k - 2), (2*k - 2, j))
%     while ne < n
%       r = mod(ne, k-1)
%       c = ifelse(r < k - 2 - ne + j_ne, 1, 0)
%       j_ne = k - 1 - ne + j_ne + k*floor(Int, (2*ne - j_ne - (k - 2))/(k - 1)) 
%       ne = (k*(ne + 1) - (r + 1))/(k - 1) - c
%     end
%     return j_ne + k*(n-ne) 
% end
% \end{verbatim}
%\end{jllisting}
%}
%\end{code}

\section{Concluding Remarks}\label{sec:conclusion}
In this paper, we derived a recurrence formula for computing successive fixed points of the Josephus function, specifically when the reduction constant is three. The proposed recurrence relation not only partially solves the open question introduced in \cite{Bel} but also enhances the efficiency of the non-recursive algorithm previously presented in \cite{Bel} for large inputs.
Our results pave the way for further research in this domain. An immediate direction is to generalize the recurrence formula for different reduction constants. Finding a recurrence formula for the general case remains an open problem, and its solution could provide a deeper understanding and broaden the applications of this problem.

\begin{acknowledgment}{Acknowledgment.}
The first author would like to express gratitude for the support provided by the NSF grant DMS -- 2307328 and by the internal research and artistry (R\&A) grant at Northern Illinois University. 
% The second author wishes to thank the Shippensburg University of Pennsylvania for its online public source ``The Josephus Game'', which was used hundreds of times to check out values of the Josephus function along the first stages of this research, and wants to dedicate this paper to his \textit{alma mater} ``Universidad Central de Venezuela'' (UCV) for its $300$th anniversary of creation.
\end{acknowledgment}

\begin{biog}
\item[Yunier Bello-Cruz] earned his Bachelor and Master degrees in mathematics from the University of Havana ($2002$ and $2005$, Havana) and his Ph.D. in Mathematics from the Instituto de Matematica Pura e Aplicada (IMPA) ($2009$, Rio de Janeiro). He has held various academic positions, including a Postdoctoral Research Fellowship at the University of British Columbia ($2013$--$2015$, Vancouver) and tenured Assistant Professor roles at Federal University of Goias ($2009$--$2016$, Goiania). Currently, Bello-Cruz is an Associate Professor at Northern Illinois University, where his research focuses on numerical analysis and continuous optimization. Bello-Cruz has received numerous research grants, including CNPq and CAPES Grants from Brazil, and NSF Grants from US, which have supported his work.

\begin{affil}
Department of Mathematical Sciences, Northern Illinois University, DeKalb IL 60115\\
yunierbello@niu.edu
\end{affil}

\item[Roy Quintero--Contreras] received his Bachelor, Master, and Ph.D. degrees in mathematics from
Universidad Central de Venezuela ($1985$, Caracas), Universidad de Los Andes ($1992$, Merida),
and the University of Iowa (1997, Iowa City), respectively. He currently holds an Instructor
position at the Northern Illinois University. His previous appointments were at the University
of Southern California as a Lecturer ($2019$--$2021$) and at the University of Iowa as a visiting
professor ($2016$--$2019$). He retired from Universidad de Los Andes, Trujillo-Venezuela, as a
Full Professor where he worked for more than $25$ years.
\begin{affil}
Department of Mathematical Sciences, Northern Illinois University, DeKalb IL 60115\\
rquinterocontreras@niu.edu
\end{affil}
\end{biog}
\vfill\eject


\begin{thebibliography}{2}

% \bibitem{Arh} Arhens, W. (1901). Mathematische Unterhaltungen und Spiele. Leipzig: Druck und Verlag Von B. G. Teubner.  

\bibitem{Asv} Asveld, P. (2011). Permuting operations on strings and their relation to prime numbers. \textit{Discret. Appl. Math.}. Vol. 159(7): 1915 – 1932. \url{https://doi.org/10.1016/j.dam.2011.07.019}.


\bibitem{Bel} Bello Cruz, Y.; Quintero-Contreras, R. (2023). Analytical Study and Efficient Evaluation of the Josephus Function. 13 pages. arXiv:2303.15457 \url{https://arxiv.org/abs/2303.15457}. 


\bibitem{Bac} Bachet, C. G. (1624). Problemes plaisants et d`electables qui se font par les nombres. 2nd ed. Lyon: Pierre Rigaud \& Associates. 
 

\bibitem{Cha} Chai, Z.; Liang, S.; Zhang, L.; Wu, Y.; Cao, C. (2018). Periodic characteristics of the Josephus ring and its application in image scrambling.  \textit{EURASIP J. Wirel. Commun. Netw.}. 132: 11 pages. \url{doi.org/10.1186/s13638-018-1167-5}.
 
\bibitem{Chuang} Chuang, W.; Chen, H.; Hsiao, F. (2021). General solution to the spectator-first Tantalizer problem. \textit{Discrete Math.}. Vol. 344(10): 8 pages. \url{https://doi.org/10.1016/j.disc.2021.112515}.

\bibitem{Cos} Cosulschi, M.; Gabroveanu, M.; Constantinescu, N. (2009). Usage of advanced data structure for improving efficiency for large $(n, m)$ permutations inspired from the Josephus problem. \textit{Rom. J. Inf. Sci. Technol.}. Vol. 12 (1): 13--24. \url{www.imt.ro/romjist/Volum12/Number12_1/pdf/02-MCosulschi.pdf}.
 

\bibitem{Dow} Dowdy, J.; Mays, M. E. (1989). Josephus Permutations. \textit{J. Comb. Math. Comb. Comput}. Vol. 6: 125--130. \url{www.math.wvu.edu/~mays/papers.htm}. 
 

\bibitem{Eul} Euler, L. (1776). Observationes circa novum et singulare progressionum genus. \textit{Euler Archive--All Works}. 476.
 

\bibitem{Gra} Graham, R.; Knuth, D.; Patashnik, O. (1994). Concrete Mathematics: A Foundation for Computer Science. 2nd ed. New Jersey: Adison--Wesley.  

% \bibitem{Gro} Groer, C. (2003). The Mathematics of Survival: From Antiquity to the Playground. \textit{Amer. Math. Monthly}. Vol. 110 (9): 812-825. \url{doi.org/10.2307/3647800}.
 

\bibitem{Hal} Halbeisen, L.; Hungerbuhler, N. (1997). The Josephus Problem. \textit{J. Theor. Nombres Bordeaux}. 9 (2): 303--318. \url{https://jtnb.centre-mersenne.org/item/JTNB_1997__9_2_303_0.pdf}.

\bibitem{Her} Herstein, I. N.; Kaplansky, I. (1974). Mathematical Matters. Harper and Row. 
 

\bibitem{Jos} Josephus, F. (2013). The Jewish War. 75. ISBN 0-14-044420-3. 

\bibitem{Knuth} Knuth, D. E. (1997). The art of computer programming (3rd ed.). Addison Wesley.
 

\bibitem{Nai} Naim, M.; Pacha, A. A.; Serief, C. (2021). A novel satellite image encryption algorithm based on hyperchaotic systems and Josephus problem. \textit{Adv. Space Res.} Vol. 67 (7): 2077-2103. \url{doi.org/10.1016/j.asr.2021.01.018}.

 

% \bibitem{New} Newman, J. R. (1956). The World of Mathematics. Simon and Schuster. New York. Vol. 4: 2428--2429. 

 

\bibitem{Odl} Odlyzko, A.; Wilf, H. (1991). Functional iteration and the Josephus problem. \textit{Glasg. Math. J.}. 33(2): 235--240. \url{doi.org/10.1017/S0017089500008272}.
 

% \bibitem{Ori} Oring, E. (1997). On the Tradition and Mathematics of Counting-Out. \textit{Western Folklore}. 56(2): 139--152. \url{doi.org/10.2307/1500203}.

 

% \bibitem{Pet} Petkovic, M. S. (2009). Famous Puzzles of Great Mathematicians. American Mathematical Society.

 

%\bibitem{Pro} Project Gutenberg's The Life of Flavius Josephus, by Flavius Josephus. 
 

% \bibitem{Rec} Recreations in Mathematics and Natural Philosophy (1835). \textit{The Dublin Penny Journal}. Vol. 3 (135): 243--244. \url{www.jstor.org/stable/30003530}.

 

\bibitem{Rob} Robinson, W. J. (1960). The Josephus Problem. \textit{Math. Gaz.}. 44(347): 47--52. \url{www.jstor.org/stable/pdf/3608532.pdf}.
 

% \bibitem{Rou} Rouse Ball, W. W. (1905). Mathematical Recreations and Essays. 4th ed. London: Macmillan.
 

% \bibitem{Smi} Smith, D. E. (1917). On the Origin of Certain Typical Problems. \textit{Amer. Math. Monthly}. Vol. 24 (2): 64--71. \url{doi.org/10.1080/00029890.1917.11998275}.
 

\bibitem{Tai} Tait, P. G. (1899). On the Generalization of Josephus' Problem. \textit{Proc. Roy. Soc. Edinburgh}. 22: 165--168. 
 

\bibitem{Uch} Uchiyama, S. (2003). On the Generalized Josephus Problem. \textit{Tsukuba J. Math}. Vol. 27 (2): 319--339. \url{www.jstor.org/stable/43686318}.
 

\bibitem{Wil} Wilson, G. L.; Morgan, C. L. (2010). An application of Fourier transforms on finite Abelian groups to an enumeration arising from the Josephus problem. \textit{J. Number Theory}. 130: 815--827. \url{doi.org/10.1016/j.jnt.2009.11.004}.

 

\bibitem{Woo} Woodhouse, D. (1978). Programming the Josephus problem. \textit{ACM SIGCSE Bull.}. Vol. 10 (4): 56--58. \url{doi.org/10.1145/988906.988919}. 

\bibitem{Yan} Yang, G.; Jin, H.; Bai, N. (2014). Image Encryption Using the Chaotic Josephus Matrix. \textit{Mathematical Problems in Engineering}. Vol. 2014: 13 pages. \url{doi.org/10.1155/2014/632060}. 

% \bibitem{hopkins} Hopkins, B. ed. (2009). \textit{Resources for Teaching Discrete Mathematics.} Washington DC: Mathematical Association of America.

% \bibitem{parker13} Parker, A. (2013). Who solved the Bernoulli equation and how did they do it? \textit{Coll. Math. J.} 44(2): 89--97. doi.org/10.4169/college.math.j.44.2.089.

\end{thebibliography}
\end{document}